\newtheorem{theorem}{Theorem}[section]
\newtheorem{lemma}[theorem]{Lemma}
\newtheorem{corollary}[theorem]{Corollary}
\theoremstyle{definition}
\newtheorem{remark}[theorem]{Remark}
\begin{document}
\setcounter{page}{1}

\title[ ]{Complete monotonicity properties of a function involving the polygamma function}

\author[K. Nantomah]{Kwara Nantomah}

\address{ Department of Mathematics, Faculty of Mathematical Sciences, University for Development Studies, Navrongo Campus, P. O. Box 24, Navrongo, UE/R, Ghana. }
\email{\textcolor[rgb]{0.00,0.00,0.84}{knantomah@uds.edu.gh}}




\subjclass[2010]{33B15, 26A48, 26D07}

\keywords{Polygamma function, complete monotonicity, inequality}

\date{Received: xxxxxx; Revised: xxxxxx; Accepted: xxxxxx.
}

\begin{abstract}
In this paper, we study completete monotonicity properties of the function $f_{a,k}(x)=\psi^{(k)}(x+a) - \psi^{(k)}(x) - \frac{ak!}{x^{k+1}}$, where $a\in(0,1)$ and $k\in \mathbb{N}_0$. Specifically, we consider the cases for $k\in \{ 2n: n\in \mathbb{N}_0 \}$ and  $k\in \{ 2n+1: n\in \mathbb{N}_0 \}$. Subsequently, we deduce some  inequalities involving the polygamma functions.  
\end{abstract} \maketitle


\section{Introduction and Preliminaries}


\noindent
The classical Gamma function,  which is an extension of the factorial notation to noninteger values is usually defined as 
\begin{align*}
\Gamma(x)&=\int_{0}^{\infty} t^{x-1}e^{-t}\,dt,  \quad x>0,
\end{align*}
and satisfying the basic property
\begin{align*}
\Gamma(x+1)&=x\Gamma(x), \quad x>0.  
\end{align*}
Its logarithmic derivative, which is called the Psi or digamma function is defined as  (see \cite[p. 258-259]{Abramowitz-Stegun-1972} and \cite[p. 139-140]{Olver-etal-2010-CUP})
\begin{align}
\psi(x)=\frac{d}{dx}\ln \Gamma(x)
&= -\gamma + \int_{0}^{\infty} \frac{e^{-t} - e^{-xt}}{1-e^{-t}}\,dt, \quad x>0,   \label{eqn:Digamma-Integral-Rep} \\
&= -\gamma - \frac{1}{x} + \sum_{k=1}^{\infty}\frac{x}{k(k+x)}, \quad x>0,  \nonumber 
\end{align}
where $\gamma=\lim_{n \rightarrow \infty} \left( \sum_{k=1}^{n}\frac{1}{k}- \ln n \right)=0.577215664...$ is the Euler-Mascheroni's constant. Derivatives of the Psi function, which are called polygamma functions are given as  \cite[p. 260]{Abramowitz-Stegun-1972}
\begin{align}
\psi^{(n)}(x)
&= (-1)^{n+1}\int_{0}^{\infty} \frac{t^{n}e^{-xt}}{1-e^{-t}}\,dt, \quad x>0,  \label{eqn:Polygamma-Integral-Rep} \\
& = (-1)^{n+1}n! \sum_{k=0}^{\infty}\frac{1}{(k+x)^{n+1}}, \quad x>0,  \nonumber 
\end{align}
satisfying the functional equation \cite[p. 260]{Abramowitz-Stegun-1972} 
\begin{equation}\label{eqn:funct-eqn-polygamma}
\psi^{(n)}(x+1) = \psi^{(n)}(x) + \frac{(-1)^nn!}{x^{n+1}}, \quad x>0,
\end{equation}
where $n\in \mathbb{N}_0$ and $\psi^{(0)}(x)\equiv \psi(x)$. Here, and for the rest of this paper, we use the notations: $\mathbb{N}=\{1,2,3,4,\dots \}$, $\mathbb{N}_0=\mathbb{N}\cup \{0\}$ and $\mathbb{R}=(-\infty, \infty)$.

\noindent
Also, it is well known in the literature that the integral
\begin{equation}\label{eqn:integral-rep-vip}
\frac{n!}{x^{n+1}} = \int_{0}^{\infty}t^{n}e^{-xt}\,dt,
\end{equation}
holds for $x>0$ and $n\in \mathbb{N}_0$. See for instance \cite[p. 255]{Abramowitz-Stegun-1972}. 

\noindent
In \cite{Qiu-Vuorinen-2004-MC}, Qiu and Vuorinen established among other things that the function
\begin{equation}\label{eqn:Qiu-Vuorinen-Ineq}
h_{1}=\psi \left( x+\frac{1}{2}\right) - \psi \left( x\right) - \frac{1}{2x}, 
\end{equation}
is strictly decreasing and convex on $(0,\infty)$. Motivated by this result, Mortici \cite{Mortici-2010-AUA} proved a more generalized and deeper result which states that, the function
\begin{equation}\label{eqn:Mortici-Gen-Ineq}
f_{a}=\psi(x+a) - \psi(x) - \frac{a}{x}, \quad a\in(0,1),
\end{equation}
is strictly completely monotonic on $(0,\infty)$. In this paper, the objective is to extend Mortici's results to the polygamma functions. Particularly, we study completete monotonicity properties of the function $f_{a,k}(x)=\psi^{(k)}(x+a) - \psi^{(k)}(x) - \frac{ak!}{x^{k+1}}$, where $a\in(0,1)$ and $k\in \mathbb{N}_0$, by considering the cases for $k\in \{ 2n: n\in \mathbb{N}_0 \}$ and $k\in \{ 2n+1: n\in \mathbb{N}_0 \}$.  Unlike Mortici's work, the techniques of the present work are simple and do not rely on the Hausdorff-Bernstein-Widder theorem.

\section{Main Results}

\noindent
We present our findings in this section by starting with the following lemma.

\begin{lemma}\label{lem:Ratio-of-exp-funct}
Let a function $q_{\alpha, \beta}(t)$ be defined as
\begin{equation}\label{eqn:Ratio-of-exp-funct}
q_{\alpha, \beta}(t)=
\begin{cases}
\frac{e^{-\alpha t}-e^{-\beta t}}{1-e^{-t}}, & \quad t\neq 0, \\
\beta-\alpha, & \quad t=0 ,
\end{cases}
\end{equation}
where $\alpha$, $\beta$ are real numbers such that $\alpha \neq \beta$ and $(\alpha, \beta)\notin \{ (0,1), (1,0) \}$. Then $q_{\alpha, \beta}(t)$ is increasing on $(0,\infty)$ if and only if $(\beta-\alpha)(1-\alpha-\beta)\geq0$ and $(\beta-\alpha)(|\alpha-\beta|-\alpha-\beta)\geq0$.
\end{lemma}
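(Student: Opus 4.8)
The plan is to reduce the monotonicity of $q_{\alpha,\beta}$ to a sign condition on the logarithmic derivative of $|q_{\alpha,\beta}|$, which will turn out to be a \emph{monotone} function whose two one-sided limits are precisely the quantities appearing in the two stated inequalities.

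First I would note that $1-e^{-t}>0$ on $(0,\infty)$, so $q_{\alpha,\beta}$ keeps the constant sign $\sigma:=\operatorname{sgn}(\beta-\alpha)\neq0$ there. Using the identities $e^{-\alpha t}-e^{-\beta t}=2e^{-(\alpha+\beta)t/2}\sinh\!\left(\tfrac{(\beta-\alpha)t}{2}\right)$ and $1-e^{-t}=2e^{-t/2}\sinh(t/2)$, I would rewrite
\[
q_{\alpha,\beta}(t)=e^{pt}\,\frac{\sinh(rt)}{\sinh(t/2)},\qquad p=\frac{1-\alpha-\beta}{2},\quad r=\frac{\beta-\alpha}{2}.
\]
Since $q_{\alpha,\beta}=\sigma|q_{\alpha,\beta}|$ with $|q_{\alpha,\beta}|>0$, the function $q_{\alpha,\beta}$ being increasing is equivalent to $\sigma\,(\ln|q_{\alpha,\beta}|)'\ge0$ on $(0,\infty)$. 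A direct computation (using $|\sinh(rt)|=\sinh(|r|t)$) gives $(\ln|q_{\alpha,\beta}|)'(t)=\Psi(t)$, where
\[
\Psi(t)=p+|r|\coth(|r|t)-\tfrac12\coth(t/2).
\]
Expanding $\coth$ near $0$ and using $\coth\to1$ at $\infty$ yields the two limits $\Psi(0^{+})=p=\tfrac12(1-\alpha-\beta)$ and $\Psi(\infty)=p+|r|-\tfrac12=\tfrac12\!\left(|\alpha-\beta|-\alpha-\beta\right)$.

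The heart of the argument is to show that $\chi(t):=|r|\coth(|r|t)-\tfrac12\coth(t/2)$, and hence $\Psi=p+\chi$, is monotone on $(0,\infty)$. I would compute
\[
\chi'(t)=\frac{1}{4\sinh^{2}(t/2)}-\frac{r^{2}}{\sinh^{2}(|r|t)},
\]
so that $\chi'(t)\ge0$ if and only if $\frac{\sinh(|r|t)}{|r|}\ge\frac{\sinh(t/2)}{1/2}$. Because $a\mapsto\frac{\sinh(at)}{a}=t\cdot\frac{\sinh(at)}{at}$ is increasing in $a>0$ for each fixed $t>0$ (this reduces to the elementary monotonicity of $s\mapsto\frac{\sinh s}{s}$), the sign of $\chi'(t)$ is constant in $t$: positive when $|r|>\tfrac12$, negative when $|r|<\tfrac12$, and identically zero when $|r|=\tfrac12$. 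Thus $\Psi$ is monotone. This constant-sign claim for $\chi'$ is the step I expect to be the main obstacle; everything else is bookkeeping.

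Finally, for a monotone $\Psi$ with one-sided limits $\Psi(0^{+})$ and $\Psi(\infty)$, the product $\sigma\Psi$ is also monotone, so its infimum over $(0,\infty)$ is the smaller of $\sigma\Psi(0^{+})$ and $\sigma\Psi(\infty)$; hence $\sigma\Psi\ge0$ throughout holds if and only if both $\sigma\Psi(0^{+})\ge0$ and $\sigma\Psi(\infty)\ge0$. Multiplying by the positive factor $|\beta-\alpha|$, these become $(\beta-\alpha)(1-\alpha-\beta)\ge0$ and $(\beta-\alpha)(|\alpha-\beta|-\alpha-\beta)\ge0$, i.e.\ exactly the two stated conditions, which establishes both directions at once. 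The hypothesis $\alpha\neq\beta$ guarantees $|r|>0$ so that $\Psi$ is well defined, while the excluded pairs $(0,1)$ and $(1,0)$ are precisely the degenerate case $p=0$, $|r|=\tfrac12$ in which $\Psi\equiv0$ and $q_{\alpha,\beta}$ is constant.
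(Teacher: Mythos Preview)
Your argument is correct. The hyperbolic rewriting $q_{\alpha,\beta}(t)=e^{pt}\sinh(rt)/\sinh(t/2)$ with $p=\tfrac{1-\alpha-\beta}{2}$ and $r=\tfrac{\beta-\alpha}{2}$ reduces the question to the sign of $\sigma\Psi$, and your observation that $a\mapsto \sinh(at)/a$ is increasing in $a>0$ indeed forces $\chi'$, hence $\Psi'$, to have constant sign, so $\Psi$ is monotone and its sign is controlled by the two endpoint limits, which are exactly the two quantities in the statement. The bookkeeping with $\sigma=\operatorname{sgn}(\beta-\alpha)$ and the multiplication by $|\beta-\alpha|$ is clean.

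By contrast, the paper does not prove this lemma at all: its ``proof'' consists solely of the citation \emph{See \cite[Theorem~1.16]{Qi-2010-JIA} or \cite[Proposition~4.1]{Qi-Luo-2012-BJMA}}. So your route is genuinely different in that it is self-contained and elementary, requiring nothing beyond basic properties of $\sinh$ and $\coth$. What your approach buys is an explicit, short proof that could replace the external reference; what the paper's approach buys is brevity, since the lemma is only a tool for the subsequent Lemma on the bounds $a<\frac{1-e^{-at}}{1-e^{-t}}<1$, and the authors evidently prefer to outsource the general monotonicity criterion rather than reprove it.
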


\begin{proof}
See \cite[Theorem 1.16]{Qi-2010-JIA} or \cite[Proposition 4.1]{Qi-Luo-2012-BJMA}.
\end{proof}

\begin{lemma}\label{lem:Ratio-of-exp-funct-particular}
Let $a\in(0,1)$. Then the inequality
\begin{equation}\label{eqn:Ratio-of-exp-funct-particular}
a< \frac{1-e^{-at}}{1-e^{-t}} < 1 ,
\end{equation}
holds for $t\in(0,\infty)$.
\end{lemma}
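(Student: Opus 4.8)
The plan is to reduce both inequalities to the single elementary observation that, for $t>0$ and $a\in(0,1)$, one has $at<t$ and hence $e^{-at}>e^{-t}$, since the exponential function is strictly decreasing. Throughout, the denominator $1-e^{-t}$ is strictly positive on $(0,\infty)$, so dividing by it preserves the direction of any inequality; this is what legitimises passing between the additive and the quotient forms of the claim.

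For the upper bound, I would argue directly: from $e^{-at}>e^{-t}$ it follows that $1-e^{-at}<1-e^{-t}$, and dividing by $1-e^{-t}>0$ yields $\frac{1-e^{-at}}{1-e^{-t}}<1$.

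For the lower bound, I would introduce the auxiliary function $F(t)=1-e^{-at}-a(1-e^{-t})$ on $[0,\infty)$. One checks that $F(0)=0$ and computes $F'(t)=a e^{-at}-a e^{-t}=a(e^{-at}-e^{-t})$, which is strictly positive for $t>0$ by the observation above. Hence $F$ is strictly increasing and $F(t)>F(0)=0$ for all $t>0$; rearranging gives $1-e^{-at}>a(1-e^{-t})$, and dividing by $1-e^{-t}>0$ produces the lower bound $\frac{1-e^{-at}}{1-e^{-t}}>a$.

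The argument presents no real obstacle; the only points requiring care are the strict positivity of the denominator and the tracking of strictness, so that the resulting inequalities are strict rather than weak. An alternative route would substitute $u=e^{-t}\in(0,1)$, reducing the claim to $a<\frac{1-u^{a}}{1-u}<1$: the upper bound then follows from $u^{a}>u$, and the lower bound from the convexity of $s\mapsto u^{s}$, which gives $u^{a}\le au+(1-a)$. I would nonetheless prefer the direct monotonicity argument above, since it avoids the substitution and dispatches both bounds uniformly through the single inequality $e^{-at}>e^{-t}$.
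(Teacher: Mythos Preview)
Your argument is correct and self-contained. Both inequalities follow, as you show, from the single fact that $e^{-at}>e^{-t}$ for $t>0$ and $a\in(0,1)$; the auxiliary function $F(t)=1-e^{-at}-a(1-e^{-t})$ with $F(0)=0$ and $F'(t)=a(e^{-at}-e^{-t})>0$ dispatches the lower bound cleanly, and strictness is preserved throughout.

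The paper takes a different route. It invokes the preceding Lemma~\ref{lem:Ratio-of-exp-funct} (a cited monotonicity criterion of Qi for the family $q_{\alpha,\beta}(t)=\frac{e^{-\alpha t}-e^{-\beta t}}{1-e^{-t}}$) with $\alpha=0$, $\beta=a$ to conclude that $h(t)=\frac{1-e^{-at}}{1-e^{-t}}$ is increasing on $(0,\infty)$, and then sandwiches $h(t)$ between its limiting values $\lim_{t\to 0^+}h(t)=a$ and $\lim_{t\to\infty}h(t)=1$. This is efficient once Lemma~\ref{lem:Ratio-of-exp-funct} is in hand, and it yields the extra information that $h$ is monotone, not merely bounded. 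Your approach, by contrast, is entirely elementary and does not import any external result; it proves exactly what is needed with a one-line derivative calculation, at the cost of not establishing monotonicity of the quotient itself. For the purposes of the present lemma, your argument is the more economical of the two.
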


\begin{proof}
Note that the function $h(t)=\frac{1-e^{-at}}{1-e^{-t}}$ which is obtained from Lemma \ref{lem:Ratio-of-exp-funct} by letting $\alpha=0$ and $\beta=a\in(0,1)$ is increasing on $(0,\infty)$. Also,
\begin{equation*}
\lim_{t\rightarrow 0^+}h(t)=a \quad \text{and} \quad \lim_{t\rightarrow \infty}h(t)=1 .
\end{equation*}
Then for $t\in(0,\infty)$, we have 
\begin{equation*}
a=\lim_{t\rightarrow 0^+}h(t)=h(0)<h(t)< h(\infty)=\lim_{t\rightarrow \infty}h(t)=1 ,
\end{equation*}
which gives \eqref{eqn:Ratio-of-exp-funct-particular}.
\end{proof}

\begin{theorem}\label{thm:Gen-Mortici-CM}
Let $a\in(0,1)$ and $k\in \{ 2n: n\in \mathbb{N}_0 \}$. Then the function
\begin{equation}\label{eqn:Gen-Mortici-CM}
f_{a,k}(x)=\psi^{(k)}(x+a) - \psi^{(k)}(x) - \frac{ak!}{x^{k+1}},
\end{equation}
is strictly completely monotonic on $(0,\infty)$.
\end{theorem}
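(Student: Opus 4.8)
The plan is to exploit the integral representations \eqref{eqn:Polygamma-Integral-Rep} and \eqref{eqn:integral-rep-vip} to rewrite $f_{a,k}$ as a single Laplace-type integral whose kernel is manifestly positive, and then to differentiate under the integral sign. First I would substitute the representations into \eqref{eqn:Gen-Mortici-CM}. Since $k=2n$ is even we have $(-1)^{k+1}=-1$, so the two polygamma terms combine to give
\begin{equation*}
\psi^{(k)}(x+a)-\psi^{(k)}(x) = \int_{0}^{\infty} t^{k}e^{-xt}\,\frac{1-e^{-at}}{1-e^{-t}}\,dt,
\end{equation*}
while the subtracted term becomes $\frac{ak!}{x^{k+1}} = a\int_{0}^{\infty}t^{k}e^{-xt}\,dt$. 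Combining these yields the single representation
\begin{equation*}
f_{a,k}(x) = \int_{0}^{\infty} t^{k}e^{-xt}\left( \frac{1-e^{-at}}{1-e^{-t}} - a \right)\,dt.
\end{equation*}

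The crucial observation is that Lemma \ref{lem:Ratio-of-exp-funct-particular} guarantees $\frac{1-e^{-at}}{1-e^{-t}} - a > 0$ for every $t\in(0,\infty)$, so that the kernel $\varphi(t) = t^{k}\bigl( \frac{1-e^{-at}}{1-e^{-t}} - a \bigr)$ is strictly positive on $(0,\infty)$. Hence $f_{a,k}$ is the Laplace transform of a positive function. To conclude strict complete monotonicity directly --- thereby avoiding the Hausdorff-Bernstein-Widder theorem, as advertised in the introduction --- I would differentiate $m$ times under the integral sign to obtain
\begin{equation*}
(-1)^{m}f_{a,k}^{(m)}(x) = \int_{0}^{\infty} t^{m}e^{-xt}\,\varphi(t)\,dt > 0, \qquad m\in\mathbb{N}_{0},\ x>0,
\end{equation*}
where the strict inequality holds because the integrand is positive on a set of positive measure. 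This is precisely the definition of strict complete monotonicity on $(0,\infty)$.

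The only technical point requiring care is the legitimacy of differentiating under the integral sign, which rests on the convergence of these integrals and an application of dominated convergence. This should be routine rather than a genuine obstacle: near $t=0$ the factor $\frac{1-e^{-at}}{1-e^{-t}}-a$ vanishes (the ratio tends to $a$ by Lemma \ref{lem:Ratio-of-exp-funct-particular}), so together with $t^{m+k}$ the integrand is integrable at the origin; for large $t$ the ratio is bounded above by $1$, so the exponential decay $e^{-xt}$ dominates the polynomial growth uniformly for $x$ in any compact subinterval of $(0,\infty)$. Once suitable dominating functions are exhibited, the interchange of differentiation and integration is justified and the proof is complete.
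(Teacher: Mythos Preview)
Your proposal is correct and follows essentially the same route as the paper: both arrive at the identity
\[
(-1)^{m}f_{a,k}^{(m)}(x)=\int_{0}^{\infty}\left[\frac{1-e^{-at}}{1-e^{-t}}-a\right]t^{k+m}e^{-xt}\,dt
\]
and invoke Lemma~\ref{lem:Ratio-of-exp-funct-particular} for positivity. The only cosmetic difference is that the paper first differentiates $f_{a,k}$ symbolically (obtaining $\psi^{(k+m)}$) and then inserts the integral representation~\eqref{eqn:Polygamma-Integral-Rep}, thereby sidestepping the need to justify differentiation under the integral sign that you discuss.
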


\begin{proof}
Recall that a function $f:(0,\infty)\rightarrow \mathbb{R}$ is said to be completely monotonic on $(0,\infty)$  if $f$ has derivatives of all order and $(-1)^{n}f^{(n)}(x)\geq0$  for all $x\in(0,\infty)$ and $n\in \mathbb{N}_0$. Let $a\in(0,1)$ and $k\in \{ 2n: n\in \mathbb{N}_0 \}$. Then, by repeated differentiation and by using \eqref{eqn:Polygamma-Integral-Rep} and \eqref{eqn:integral-rep-vip}, we obtain
\begin{align*}
f_{a,k}^{(n)}(x)&=\psi^{(k+n)}(x+a) - \psi^{(k+n)}(x) - \frac{(-1)^{n}a(k+n)!}{x^{k+n+1}}  \\
&= (-1)^{k+n+1}\int_{0}^{\infty}\frac{t^{k+n}e^{-(x+a)t}}{1-e^{-t}}\,dt -  (-1)^{k+n+1}\int_{0}^{\infty}\frac{t^{k+n}e^{-xt}}{1-e^{-t}}\,dt   \\
& \quad -  (-1)^na\int_{0}^{\infty}t^{k+n}e^{-xt}\,dt.  
\end{align*}
Then
\begin{align*}
(-1)^nf_{a,k}^{(n)}(x)&= -\int_{0}^{\infty}\frac{t^{k+n}e^{-xt}e^{-at}}{1-e^{-t}}\,dt + \int_{0}^{\infty}\frac{t^{k+n}e^{-xt}}{1-e^{-t}}\,dt - a\int_{0}^{\infty}t^{k+n}e^{-xt}\,dt  \\
&= \int_{0}^{\infty} \left[\frac{1-e^{-at}}{1-e^{-t}} - a \right]t^{k+n}e^{-xt}\,dt \\
&>0,
\end{align*}
as a result of Lemma \ref{lem:Ratio-of-exp-funct-particular}. Alternatively, we could proceed as follows.
\begin{align*}
(-1)^nf_{a,k}^{(n)}(x)&= \int_{0}^{\infty} \left[\frac{1-e^{-at}}{1-e^{-t}} - a \right]t^{k+n}e^{-xt}\,dt   \\
&= a\int_{0}^{\infty} \left[\frac{1-e^{-at}}{at} - \frac{1-e^{-t}}{t} \right]\frac{t^{k+n+1}e^{-xt}}{1-e^{-t}}\,dt \\
&>0.
\end{align*}
Notice that, since the function $\frac{1-e^{-t}}{t}$ is strictly decreasing on $(0,\infty)$, then for $a\in(0,1)$, we have $\frac{1-e^{-at}}{at} > \frac{1-e^{-t}}{t}$.
\end{proof}

\begin{remark}
Since every completely monotonic function is convex and decreasing, it follows that $f_{a,k}(x)$ is strictly convex and strictly decreasing on $(0,\infty)$. 
\end{remark}

\begin{corollary}\label{cor:Gen-Vuorinen-type-ineq}
The inequality
\begin{equation}\label{eqn:Gen-Vuorinen-type-ineq}
\frac{ak!}{x^{k+1}} < \psi^{(k)}(x+a) - \psi^{(k)}(x) < 
\psi^{(k)}(a) - \psi^{(k)}(1) + k!\left(\frac{a}{x^{k+1}} + \frac{1}{a^{k+1}} - a\right),
\end{equation}
holds for $a\in(0,1)$, $k\in \{ 2n: n\in \mathbb{N}_0 \}$ and $x\in(1,\infty)$.
\end{corollary}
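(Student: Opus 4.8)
The plan is to read off both inequalities directly from Theorem~\ref{thm:Gen-Mortici-CM}, since for even $k$ that result already encodes everything needed; no new estimation is required, only a single application of the functional equation \eqref{eqn:funct-eqn-polygamma}. The left inequality is the positivity statement and the right inequality is the monotonicity statement, evaluated at the endpoint $x=1$.

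First I would establish the lower bound. Complete monotonicity of $f_{a,k}$ on $(0,\infty)$ means in particular that $(-1)^{0}f_{a,k}^{(0)}(x)=f_{a,k}(x)>0$ for all $x\in(0,\infty)$. Unwinding the definition \eqref{eqn:Gen-Mortici-CM}, this is precisely
\begin{equation*}
\psi^{(k)}(x+a)-\psi^{(k)}(x)-\frac{ak!}{x^{k+1}}>0,
\end{equation*}
which rearranges to the left-hand inequality $\frac{ak!}{x^{k+1}}<\psi^{(k)}(x+a)-\psi^{(k)}(x)$, valid in fact on all of $(0,\infty)$ and a fortiori on $(1,\infty)$.

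Next I would handle the upper bound using the Remark: a completely monotonic function is decreasing, so $f_{a,k}$ is strictly decreasing on $(0,\infty)$. Hence for every $x\in(1,\infty)$ we have $f_{a,k}(x)<f_{a,k}(1)$. The one computational step is to put $f_{a,k}(1)$ into the advertised form. Applying \eqref{eqn:funct-eqn-polygamma} with $n=k$ and argument $a$, and using that $k$ is even so that $(-1)^{k}=1$, gives $\psi^{(k)}(1+a)=\psi^{(k)}(a)+\frac{k!}{a^{k+1}}$. Substituting into $f_{a,k}(1)=\psi^{(k)}(1+a)-\psi^{(k)}(1)-ak!$ yields
\begin{equation*}
f_{a,k}(1)=\psi^{(k)}(a)-\psi^{(k)}(1)+\frac{k!}{a^{k+1}}-ak!.
\end{equation*}
Combining this with $f_{a,k}(x)<f_{a,k}(1)$, expanding the left side via \eqref{eqn:Gen-Mortici-CM}, moving $\frac{ak!}{x^{k+1}}$ to the right, and factoring out $k!$ produces exactly the upper bound in \eqref{eqn:Gen-Vuorinen-type-ineq}.

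There is essentially no obstacle here; the only point demanding care is the parity bookkeeping. Because the hypothesis restricts to even $k$, the sign factor $(-1)^{k}$ collapses to $+1$, so the functional equation contributes a clean $+\frac{k!}{a^{k+1}}$ with no sign ambiguity, and the term $\frac{ak!}{x^{k+1}}$ reappears on the right with a positive coefficient. (This is also why the companion odd-$k$ case must be treated separately, since there the sign would flip.) I would close by noting that the restriction $x\in(1,\infty)$ enters only through the comparison $f_{a,k}(x)<f_{a,k}(1)$, whereas the lower bound holds on the full interval.
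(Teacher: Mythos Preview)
Your proposal is correct and matches the paper's proof almost exactly: both deduce the upper bound from the strict decrease of $f_{a,k}$ via $f_{a,k}(x)<f_{a,k}(1)$ and simplify $f_{a,k}(1)$ with \eqref{eqn:funct-eqn-polygamma}. The only cosmetic difference is the lower bound: the paper obtains $f_{a,k}(x)>0$ from the decreasing property together with $\lim_{x\to\infty}f_{a,k}(x)=0$, whereas you invoke the $n=0$ case of strict complete monotonicity directly; both are immediate consequences of Theorem~\ref{thm:Gen-Mortici-CM}.
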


\begin{proof}
Since $f_{a,k}(x)$  is decreasing, then for $x\in[1,\infty)$ and by \eqref{eqn:funct-eqn-polygamma}, we obtain
\begin{align*}
0=\lim_{x\rightarrow \infty}f_{a,k}(x) < f_{a,k}(x) < f_{a,k}(1)&=\psi^{(k)}(a+1) - \psi^{(k)}(1) - ak! \\
&=\psi^{(k)}(a) - \psi^{(k)}(1) + \frac{k!}{a^{k+1}} - ak! ,
\end{align*}
which completes the proof.
\end{proof}

\begin{remark}
If $a=\frac{1}{2}$ and $k=0$ in Corollary \ref{cor:Gen-Vuorinen-type-ineq}, then we obtain
\begin{equation}\label{eqn:Gen-Vuorinen-type-ineq-particular}
\frac{1}{2x} < \psi \left(x+\frac{1}{2} \right) -\psi(x) < \frac{1}{2x} + \frac{3}{2} - 2\ln2, \quad x\in(1,\infty) .
\end{equation}
\end{remark}

\begin{remark}
If $a=\frac{1}{2}$ and $k=2$ in Corollary \ref{cor:Gen-Vuorinen-type-ineq}, then we obtain
\begin{equation}\label{eqn:Gen-Vuorinen-type-ineq-particular}
\frac{1}{x^3} < \psi'' \left(x+\frac{1}{2} \right) -\psi''(x) < \frac{1}{x^3} + 15 - 12 \zeta(3), \quad x\in(1,\infty),
\end{equation}
where $\zeta(x)$ is the Riemann zeta function.
\end{remark}

\begin{theorem}\label{thm:Gen-Mortici-CM-Type-2}
For $a\in(0,1)$ and $k\in \{ 2n+1: n\in \mathbb{N}_0 \}$, let
\begin{equation}\label{eqn:Gen-Mortici-CM-Type-2}
h_{a,k}(x)=\psi^{(k)}(x+a) - \psi^{(k)}(x) - \frac{ak!}{x^{k+1}}.
\end{equation}
Then $-h_{a,k}(x)$ is strictly completely monotonic on $(0,\infty)$. 
\end{theorem}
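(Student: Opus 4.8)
The plan is to mirror the computation carried out for the even case in Theorem \ref{thm:Gen-Mortici-CM}, paying careful attention to the change in parity. Since the target object is $-h_{a,k}$, I must verify that $(-1)^n\frac{d^n}{dx^n}\bigl[-h_{a,k}(x)\bigr]>0$ for every $n\in\mathbb{N}_0$ and every $x\in(0,\infty)$; equivalently, that $(-1)^{n+1}h_{a,k}^{(n)}(x)>0$. First I would differentiate $h_{a,k}$ term by term. Exactly as in the even case, using the fact that differentiating a polygamma function raises its order together with \eqref{eqn:integral-rep-vip}, I obtain
\[
h_{a,k}^{(n)}(x)=\psi^{(k+n)}(x+a)-\psi^{(k+n)}(x)-\frac{(-1)^n a(k+n)!}{x^{k+n+1}}.
\]

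Next I would substitute the integral representation \eqref{eqn:Polygamma-Integral-Rep}. The decisive difference from the even case lies in the sign factor $(-1)^{k+n+1}$ attached to the two polygamma integrals: when $k$ is \emph{odd} this equals $(-1)^n$ rather than $(-1)^{n+1}$. Tracking this through, I expect
\[
h_{a,k}^{(n)}(x)=(-1)^n\left[\int_0^\infty\frac{t^{k+n}e^{-(x+a)t}}{1-e^{-t}}\,dt-\int_0^\infty\frac{t^{k+n}e^{-xt}}{1-e^{-t}}\,dt-a\int_0^\infty t^{k+n}e^{-xt}\,dt\right].
\]
Multiplying by $(-1)^{n+1}$ cancels the $(-1)^n$ and flips the overall sign, so the two polygamma integrals recombine through $e^{-xt}-e^{-(x+a)t}=e^{-xt}\bigl(1-e^{-at}\bigr)$ and, crucially, the $a$-term now enters with a \emph{positive} sign:
\[
(-1)^{n+1}h_{a,k}^{(n)}(x)=\int_0^\infty\left[\frac{1-e^{-at}}{1-e^{-t}}+a\right]t^{k+n}e^{-xt}\,dt.
\]

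Finally, positivity is immediate: for $t\in(0,\infty)$ both $\frac{1-e^{-at}}{1-e^{-t}}$ and $a$ are strictly positive, and $t^{k+n}e^{-xt}>0$, so the integrand is strictly positive and hence the integral is strictly positive. Therefore $(-1)^n\frac{d^n}{dx^n}\bigl[-h_{a,k}(x)\bigr]>0$ for all $n\in\mathbb{N}_0$ and $x\in(0,\infty)$, which is precisely strict complete monotonicity of $-h_{a,k}$. I do not anticipate a genuine obstacle: unlike the even case, there is no need to invoke Lemma \ref{lem:Ratio-of-exp-funct-particular}, because the sign change forced by the odd parity turns the bracket into a sum of positive terms rather than a difference. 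The only point requiring care is the bookkeeping of the factor $(-1)^{k+n+1}=(-1)^n$, which is exactly what converts the subtraction of $a$ in the even case into an addition here.
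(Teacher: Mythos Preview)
Your proposal is correct and follows essentially the same route as the paper's proof: differentiate, insert the integral representations \eqref{eqn:Polygamma-Integral-Rep} and \eqref{eqn:integral-rep-vip}, use the parity $(-1)^{k+n+1}=(-1)^n$ for odd $k$, and combine to obtain $\int_0^\infty\bigl[a+\frac{1-e^{-at}}{1-e^{-t}}\bigr]t^{k+n}e^{-xt}\,dt>0$. The only cosmetic difference is that the paper first negates $h_{a,k}^{(n)}$ and then multiplies by $(-1)^n$, whereas you multiply $h_{a,k}^{(n)}$ by $(-1)^{n+1}$; your observation that Lemma~\ref{lem:Ratio-of-exp-funct-particular} is unnecessary here matches the paper as well.
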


\begin{proof}
Similarly, for $a\in(0,1)$ and $k\in \{ 2n+1: n\in \mathbb{N}_0 \}$, we have
\begin{align*}
- h_{a,k}^{(n)}(x) 
&=\frac{(-1)^{n}a(k+n)!}{x^{k+n+1}} + \psi^{(k+n)}(x)  - \psi^{(k+n)}(x+a) \\
&=  (-1)^na\int_{0}^{\infty}t^{k+n}e^{-xt}\,dt +  (-1)^{k+n+1}\int_{0}^{\infty}\frac{t^{k+n}e^{-xt}}{1-e^{-t}}\,dt  \\ & \quad - (-1)^{k+n+1}\int_{0}^{\infty}\frac{t^{k+n}e^{-(x+a)t}}{1-e^{-t}}\,dt .
\end{align*}
Then,
\begin{align*}
(-1)^n \left(- h_{a,k}\right)^{(n)}(x)
&= a\int_{0}^{\infty}t^{k+n}e^{-xt}\,dt + \int_{0}^{\infty}\frac{t^{k+n}e^{-xt}}{1-e^{-t}}\,dt - \int_{0}^{\infty}\frac{t^{k+n}e^{-xt}e^{-at}}{1-e^{-t}}\,dt   \\
&= \int_{0}^{\infty} \left[a + \frac{1-e^{-at}}{1-e^{-t}} \right]t^{k+n}e^{-xt}\,dt \\
&>0 ,
\end{align*}
which completes the proof.
\end{proof}

\begin{remark}
Since $-h_{a,k}(x)$ is strictly completely monotonic on $(0,\infty)$, it follows that $h_{a,k}(x)$ is strictly concave and strictly increasing on $(0,\infty)$. 
\end{remark}

\begin{corollary}\label{cor:Gen-Vuorinen-type-ineq-Odd-k}
The inequality
\begin{equation}\label{eqn:Gen-Vuorinen-type-ineq-Odd-k}
\psi^{(k)}(a) - \psi^{(k)}(1) + k!\left(\frac{a}{x^{k+1}} - \frac{1}{a^{k+1}} - a\right) <
\psi^{(k)}(x+a) - \psi^{(k)}(x) < 
\frac{ak!}{x^{k+1}},
\end{equation}
holds for $a\in(0,1)$, $k\in \{ 2n+1: n\in \mathbb{N}_0 \}$ and $x\in(1,\infty)$.
\end{corollary}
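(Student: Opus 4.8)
The plan is to follow the template of Corollary \ref{cor:Gen-Vuorinen-type-ineq} verbatim, with the two bounding inequalities reversed, since by the remark following Theorem \ref{thm:Gen-Mortici-CM-Type-2} the function $h_{a,k}(x)$ is now strictly \emph{increasing} on $(0,\infty)$ rather than decreasing. First I would record the boundary behaviour: as $x\to\infty$ both $\psi^{(k)}(x+a)-\psi^{(k)}(x)\to 0$ and $\frac{ak!}{x^{k+1}}\to 0$, so $\lim_{x\to\infty}h_{a,k}(x)=0$. Monotonicity then gives, for every $x\in(1,\infty)$, the chain
\[
h_{a,k}(1) < h_{a,k}(x) < \lim_{x\to\infty}h_{a,k}(x) = 0 .
\]

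The right-hand piece $h_{a,k}(x)<0$ is already the desired upper bound, since unwinding the definition \eqref{eqn:Gen-Mortici-CM-Type-2} yields at once $\psi^{(k)}(x+a)-\psi^{(k)}(x)<\frac{ak!}{x^{k+1}}$. For the lower bound I would evaluate $h_{a,k}(1)$ explicitly by means of the functional equation \eqref{eqn:funct-eqn-polygamma}. Taking $n=k$ there and using that $k$ is odd, so that $(-1)^k=-1$, one gets $\psi^{(k)}(a+1)=\psi^{(k)}(a)-\frac{k!}{a^{k+1}}$, and hence
\[
h_{a,k}(1)=\psi^{(k)}(a+1)-\psi^{(k)}(1)-ak!=\psi^{(k)}(a)-\psi^{(k)}(1)-\frac{k!}{a^{k+1}}-ak!.
\]
Substituting this value into the left-hand piece $h_{a,k}(1)<h_{a,k}(x)$ and then adding $\frac{ak!}{x^{k+1}}$ to both sides transfers the $-\frac{ak!}{x^{k+1}}$ term across and reproduces exactly the lower bound stated in \eqref{eqn:Gen-Vuorinen-type-ineq-Odd-k}.

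There is no genuine obstacle here; the argument is a direct corollary of the monotonicity already established. The only point demanding attention is the sign carried by $(-1)^k$ in the functional equation: because $k$ is odd the term contributed is $-\frac{k!}{a^{k+1}}$, which is precisely why the bracket in the lower bound contains $-\frac{1}{a^{k+1}}$, in contrast to the $+\frac{1}{a^{k+1}}$ appearing in the even-$k$ Corollary \ref{cor:Gen-Vuorinen-type-ineq}. Keeping this sign straight is the whole of the bookkeeping that separates the two results.
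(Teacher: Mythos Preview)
Your proposal is correct and follows exactly the paper's own argument: use the strict monotone increase of $h_{a,k}$ to sandwich $h_{a,k}(x)$ between $h_{a,k}(1)$ and $\lim_{x\to\infty}h_{a,k}(x)=0$, then evaluate $h_{a,k}(1)$ via the functional equation \eqref{eqn:funct-eqn-polygamma} with the odd-$k$ sign. Your write-up is in fact slightly more detailed than the paper's, which compresses all of this into a single displayed chain.
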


\begin{proof}
Since $h_{a,k}(x)$  is increasing, then for $x\in(1,\infty)$, and by using \eqref{eqn:funct-eqn-polygamma}, we obtain
\begin{equation*}
\psi^{(k)}(a) - \psi^{(k)}(1) - \frac{k!}{a^{k+1}} - ak! =  h_{a,k}(1) <  h_{a,k}(x) < \lim_{x\rightarrow \infty}h_{a,k}(x)=0,
\end{equation*}
which yields \eqref{eqn:Gen-Vuorinen-type-ineq-Odd-k}.
\end{proof}

\begin{remark}
Particularly, if $a=\frac{1}{2}$ and $k=1$ in Corollary \ref{cor:Gen-Vuorinen-type-ineq-Odd-k}, then we obtain
\begin{equation}\label{eqn:Gen-Vuorinen-type-ineq-particular-Odd-k-1}
\frac{1}{2x^2}+\frac{\pi^2}{3} - \frac{9}{2} < \psi' \left(x+\frac{1}{2} \right) -\psi'(x) < \frac{1}{2x^2}, \quad x\in(1,\infty) .
\end{equation}
\end{remark}

\begin{remark}
Likewise, if $a=\frac{1}{2}$ and $k=3$ in Corollary \ref{cor:Gen-Vuorinen-type-ineq-Odd-k}, then we obtain
\begin{equation}\label{eqn:Gen-Vuorinen-type-ineq-particular-Odd-k-2}
\frac{3}{x^4} + \frac{14\pi^{4}}{15} - 99 < \psi''' \left(x+\frac{1}{2} \right) -\psi'''(x) < \frac{3}{x^4}, \quad x\in(1,\infty) .
\end{equation}
\end{remark}

\section*{Conflicts of Interest}

\noindent
The author declares that there are no conflicts of interest regarding the publication of this paper.

\bibliographystyle{plain}

\begin{thebibliography}{99}

\bibitem{Abramowitz-Stegun-1972} M. Abramowitz and I. A. Stegun (Eds), \textit{Handbook of Mathematical Functions with Formulas, Graphs, and Mathematical Tables}, National Bureau of Standards, Applied Mathematics Series 55, 10th Printing, Washington, 1972.

\bibitem{Mortici-2010-AUA} C. Mortici, \textit{A sharp inequality involving the psi function}, Acta Univ. Apulensis Math. Inform., 22(2010), 41-45.

\bibitem{Olver-etal-2010-CUP} F. W. J. Olver, D. W. Lozier, R. F. Boisvert, and C. W. Clark, \textit{NIST Handbook of Mathematical Functions}, Cambridge University Press, New York, 2010. 

\bibitem{Qi-2010-JIA} F. Qi, \textit{Bounds for the Ratio of Two Gamma Functions}, J. Inequal. Appl. 2010(2010), Article ID 493058, 84 pages.

\bibitem{Qi-Luo-2012-BJMA} F. Qi and Q-M. Luo, \textit{Bounds for the ratio of two gamma functions - From Wendel's and related inequalities to logarithmically completely monotonic functions}, Banach J. Math. Anal. 6(2)(2012), 132-158.

\bibitem{Qiu-Vuorinen-2004-MC} S.-L. Qiu and M. Vuorinen, \textit{Some properties of the gamma and psi functions, with applications}, Math. Comp., 74(250)(2004), 723-742.

\end{thebibliography}


\end{document}